\renewcommand\eqref[1]{(\ref{#1})} %Need with hyperref
\numberwithin{equation}{section}
\theoremstyle{plain}
\newtheorem{thm}{Theorem}[section]
\newtheorem{prop}[thm]{Proposition}
\theoremstyle{definition}
\begin{document}
\title[On geometric estimates for pull-in voltage]
{On geometric estimates for some problems arising from modeling pull-in voltage in MEMS}

\author[Durvudkhan Suragan]{Durvudkhan Suragan}
\address{
  Durvudkhan Suragan:
   \endgraf
	Department of Mathematics
\endgraf
School of Science and Technology, Nazarbayev University
\endgraf
53 Kabanbay Batyr Ave, Astana 010000
\endgraf
Kazakhstan
\endgraf
  {\it E-mail address} {\rm durvudkhan.suragan@nu.edu.kz}}

\author[Dongming Wei]{Dongming Wei}
\address{
	Dongming Wei:
	\endgraf
	Department of Mathematics
	\endgraf
	School of Science and Technology, Nazarbayev University
	\endgraf
	53 Kabanbay Batyr Ave, Astana 010000
	\endgraf
	Kazakhstan
	\endgraf
	{\it E-mail address} {\rm dongming.wei@nu.edu.kz}
}
\thanks{The first author was partially supported by NU SPG. 
	No new data was collected or generated during the course of this research.}

 \keywords{$p$-MEMS problem, pull-in voltage, Talenti's comparison principle,
 geometric estimate}
 \subjclass{47J10, 35J60}

\begin{abstract}
In this paper for all $p>1$ we prove that the pull-in voltage of the $p$-MEMS (micro-electro mechanical systems) problems on a smooth bounded domain of $\mathbb R^{d}, d\geq1,$ is minimized by symmetrizing the domain and the permittivity profile. The proofs rely on some suitable version of Talenti's comparison principle.  We also  demonstrate our method to the multidimensional MEMS type problems on the whole space $\mathbb R^{d}, d\geq3,$ and the Dirichlet boundary value problems of second order uniformly elliptic differential operators.
\end{abstract}
     \maketitle
\section{Introduction}
\label{intro}

Let us recall the second order differential equation with the singular nonlinearity modeling stationary MEMS (micro-electro mechanical systems): 
\begin{equation}\label{MEMSLaplacian-Dirichlet}\left\{\begin{array}{l}
-\Delta u(x)=\lambda \frac{f(x)}{(1-u(x))^{2}},\; 0\leq u(x) < 1,\,x\in\Omega\subset \mathbb R^{d}, d\ge 1 \\
u(x)=0,\,\,\,x\in\partial\Omega.
\end{array}\right.\end{equation}
Here $f$ describes the varying permittivity profile of the elastic membrane with \\ $f\in C^{\alpha}(\bar{\Omega})$ for some $\alpha \in (0,1], \, 0\leq f \leq 1,$ and $f\not\equiv0$.
The Dirichlet pull-in voltage is defined as
$$\lambda^{\ast}(\Omega,f)=\sup\{\lambda>0 \;|\; (1.1) \,\text{possesses at least one classical solution}\}.$$

For the Dirichlet pull-in voltage the following inequality holds:
\begin{thm}[Proposition 2.2.1 \cite{EGG}] \label{prop:1} A ball $B$ (with the symmetrized permittivity profile) is a minimizer of the Dirichlet pull-in voltage  among all domains of given volume, i.e.,
	$$
	\lambda^{\ast}(B,f^{\ast})\leq \lambda^{\ast}(\Omega,f)
	$$
	for an arbitrary domain $\Omega\subset \mathbb R^{d}$ with $|\Omega|=|B|$, where $|\cdot|$ is the Lebesgue measure in $\mathbb R^{d},d\ge 1$. Here $f^{\ast}$ is the symmetric decreasing rearrangement of $f$. 
\end{thm}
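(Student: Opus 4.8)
The plan is to prove the implication: \emph{if $0<\lambda<\lambda^{\ast}(B,f^{\ast})$, then \eqref{MEMSLaplacian-Dirichlet} on $(\Omega,f)$ still admits a classical solution}, so that $\lambda\le\lambda^{\ast}(\Omega,f)$; letting $\lambda\uparrow\lambda^{\ast}(B,f^{\ast})$ then gives the theorem. Fix such a $\lambda$. By the standard sub/supersolution theory for \eqref{MEMSLaplacian-Dirichlet}, there is a minimal solution $v$ of $-\Delta v=\lambda f^{\ast}(1-v)^{-2}$ in $B$, $v=0$ on $\partial B$; since the problem is rotationally invariant and $f^{\ast}$ is radially nonincreasing, $v$ is radially symmetric and nonincreasing, and $\|v\|_{\infty}<1$. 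On $\Omega$ I would run the usual monotone iteration $u_{0}\equiv 0$ and $-\Delta u_{n+1}=\lambda f(1-u_{n})^{-2}$ in $\Omega$, $u_{n+1}=0$ on $\partial\Omega$, which is well defined and nondecreasing as long as the $u_{n}$ stay below $1$. Throughout, $w^{\#}$ denotes the one-dimensional decreasing rearrangement of $w$ (equivalently, for a function on $B$, the radial profile of $w^{\ast}$ written in the volume variable $V\in[0,|B|]$).

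The core claim to establish is $u_{n}^{\#}\le v^{\#}$ on $[0,|\Omega|]$ for every $n$ (equivalently $u_{n}^{\ast}\le v$ on $B=\Omega^{\ast}$), proved by induction; the case $n=0$ is trivial. Assume $u_{n}^{\#}\le v^{\#}$. The coarea formula, the isoperimetric inequality and Cauchy--Schwarz -- the ingredients of Talenti's comparison principle -- yield, for a.e.\ $V\in(0,|\Omega|)$,
\[
-\frac{d}{dV}u_{n+1}^{\#}(V)\ \le\ \frac{1}{(d\,\omega_{d}^{1/d})^{2}\,V^{2-2/d}}\int_{0}^{V}\Bigl(\frac{\lambda f}{(1-u_{n})^{2}}\Bigr)^{\#}(s)\,ds ,
\]
where $\omega_{d}=|B_{1}|$, whereas for the \emph{radial} function $v$ the same identity holds with equality and with $\int_{0}^{V}\lambda f^{\#}(s)(1-v^{\#}(s))^{-2}\,ds$ in place of $\int_{0}^{V}(\lambda f(1-u_{n})^{-2})^{\#}(s)\,ds$ (integrate $-\Delta v$ over concentric balls and use Gauss' theorem). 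So it suffices to show, for all $V$,
\[
\int_{0}^{V}\Bigl(\frac{\lambda f}{(1-u_{n})^{2}}\Bigr)^{\#}(s)\,ds\ \le\ \int_{0}^{V}\lambda f^{\#}(s)\,(1-v^{\#}(s))^{-2}\,ds .
\]
The left-hand side equals $\lambda\sup_{|E|=V}\int_{E}f(1-u_{n})^{-2}\,dx$, and the decisive point is the integral rearrangement inequality $\int_{E}fg\,dx\le\int_{0}^{|E|}f^{\#}(s)g^{\#}(s)\,ds$, valid for $f,g\ge0$ because $(\mathbf 1_{E}f)^{\#}\le f^{\#}\mathbf 1_{[0,|E|)}$ together with the Hardy--Littlewood inequality -- one cannot use the \emph{false} pointwise bound $(fg)^{\ast}\le f^{\ast}g^{\ast}$. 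Applying it with $g=(1-u_{n})^{-2}$, and noting $g^{\#}=(1-u_{n}^{\#})^{-2}$ since $s\mapsto(1-s)^{-2}$ is increasing, the left-hand side is $\le\int_{0}^{V}\lambda f^{\#}(s)(1-u_{n}^{\#}(s))^{-2}\,ds\le\int_{0}^{V}\lambda f^{\#}(s)(1-v^{\#}(s))^{-2}\,ds$ by the induction hypothesis. Hence $-\frac{d}{dV}u_{n+1}^{\#}\le-\frac{d}{dV}v^{\#}$ a.e.; integrating from $V$ to $|\Omega|$ and using $u_{n+1}^{\#}(|\Omega|)=v^{\#}(|\Omega|)=0$ gives $u_{n+1}^{\#}\le v^{\#}$, closing the induction.

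It follows that $\|u_{n}\|_{\infty}=u_{n}^{\#}(0)\le v^{\#}(0)=\|v\|_{\infty}<1$ uniformly in $n$, so the nondecreasing sequence $u_{n}$ converges to some $u$ with $0\le u\le\|v\|_{\infty}<1$; by elliptic regularity and a standard limiting argument $u$ solves \eqref{MEMSLaplacian-Dirichlet} classically on $(\Omega,f)$ at the level $\lambda$, hence $\lambda\le\lambda^{\ast}(\Omega,f)$, and the theorem follows. I expect the main obstacle to be exactly the product estimate above: the permittivity profile must enter the comparison only through $f^{\ast}$, which rules out any pointwise manipulation of the rearrangement of $f(1-u_{n})^{-2}$ and forces one to work with the integral (Hardy--Littlewood) form of symmetrization; once that is secured, the remainder is routine Talenti/rearrangement machinery together with the classical MEMS sub/supersolution theory, which supplies both the radial minimal solution $v$ on $B$ and the convergence of the iteration on $\Omega$.
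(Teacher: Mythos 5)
Your argument is correct in substance, and it takes a genuinely different route from the one this paper uses. The paper never proves Theorem \ref{prop:1} itself (it is quoted from \cite{EGG}); its own method, in the proofs of Theorems \ref{pTHM_main}, \ref{THM_main} and \ref{uniformly-ellipticTHM_main}, runs \emph{two} Picard iterations --- $u_n$ on $\Omega$ and $v_n$ on the ball --- applies Talenti's principle as a black box at each step to an auxiliary $\tilde v_n$, and then uses the comparison principle to get $\tilde v_n\le v_n$; that black-box step rests on the standing assumption $(f g(u))^{\ast}=f^{\ast}g(u^{\ast})$, which for non-constant $f$ is generally false and is simply hypothesized there. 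You instead iterate only on $\Omega$, compare against the fixed minimal solution $v$ on $B$ for $\lambda<\lambda^{\ast}(B,f^{\ast})$, and open up Talenti's proof, replacing the (false) pointwise factorization of $\bigl(f(1-u_n)^{-2}\bigr)^{\ast}$ by the Hardy--Littlewood integral inequality $\int_E fg\,dx\le\int_0^{|E|}f^{\#}(s)g^{\#}(s)\,ds$ inside the coarea/isoperimetric derivation. That is exactly what makes the statement true for an arbitrary permittivity profile $f$ --- i.e.\ it proves the actual Theorem \ref{prop:1}, which the paper's own scheme covers only under its factorization hypothesis --- and it also gives the uniform bound $\|u_n\|_{\infty}\le\|v\|_{\infty}<1$ directly, so you never have to discuss convergence of a second iteration on the ball; this is essentially the route of \cite{EGG}. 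What the paper's softer scheme buys in exchange is that it transfers verbatim to the $p$-Laplacian and to general nonlinearities $g$ (Theorems \ref{pTHM_main} and \ref{uniformly-ellipticTHM_main}). Two details you should make explicit in a full write-up: integrating $-\frac{d}{dV}u_{n+1}^{\#}\le-\frac{d}{dV}v^{\#}$ from $V$ to $|\Omega|$ uses the absolute continuity of $u_{n+1}^{\#}$ (standard in Talenti's argument for bounded right-hand sides, but not automatic for a merely nonincreasing function), and the equality statement for $v$ uses that the minimal solution on the ball is radial and radially nonincreasing, which follows because each iterate of the ball scheme has a nonnegative, radially nonincreasing right-hand side.
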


In this paper we consider a generalized stationary MEMS problem, so called $p$-MEMS equation (see \cite{CES08}): 

\begin{equation}\label{intropMEMSLaplacian-Dirichlet}\left\{\begin{array}{l}
-\Delta_p u(x)=-\text{div}(|\nabla u|^{p-2}\nabla u)=\lambda f(x)g(u),\; 0\leq u(x) < 1,\,x\in\Omega\subset \mathbb R^{d}, \\
u(x)=0,\,\,\,x\in\partial\Omega.
\end{array}\right.\end{equation}
where $\lambda >0$ and 
 $f$ is a smooth positive function. The non-linearity $g(u)$ is a non-decreasing, positive function defined on $[0,1)$
with a singularity at $u=1$:
\begin{equation}
\underset{u\rightarrow 1^{-}}{\lim}  g(u)=+\infty.
\end{equation}
We also assume that $(\lambda f(x)g(u))^{\ast}=\lambda f^{\ast}(x)g(u^{\ast})$, where by $\ast$ we denote the symmetric decreasing rearrangement. Clearly, these assumptions enables us to apply Talenti's comparison principle, and to simplify the proofs. 
For example, $g(u)=(1-u)^{-m},\;m\in \mathbb{N},\; f=const>0,$ 
satisfies the all above assumptions. Note that in the case $d=2$, \eqref{intropMEMSLaplacian-Dirichlet} is applicable for some nonlinear material pull-in applications. This is motivated by considering the membrane equation for nonlinear materials with constitutive stress-strain equation in the form $\sigma=|\epsilon|^{p-2}\epsilon$ subject to pull-in, where 
$\sigma$ is the stress and $\epsilon$ is the strain.
Similarly, the $p$-MEMS pull-in voltage is defined as
$$\lambda^{\ast}_{p}(\Omega,f)=\sup\{\lambda>0 \;|\; (1.2) \,\text{possesses at least one classical solution}\}.$$

In \cite{CES} Castorina, Esposito and Sciunzi proved that $\lambda^{\ast}_{p}<\infty$ and for every $\lambda \in (0,\lambda^{\ast}_{p})$ there is a minimal (and semi-stable) solution $u_{\lambda}$ (i.e. $u_{\lambda}$ is the smallest positive solution of \eqref{intropMEMSLaplacian-Dirichlet} in a pointwise sense). 

The main result of the present paper is: A ball $B$ (with the symmetrized permittivity profile) is a minimizer of the Dirichlet $p$-MEMS pull-in voltage  among all domains of given volume, i.e.
$$
\lambda^{\ast}_{p}(B,f^{\ast})\leq \lambda^{\ast}_{p}(\Omega,f), \; 1<p<\infty,
$$
for an arbitrary smooth domain $\Omega\subset \mathbb R^{d}, d \ge 1,$ with $|\Omega|=|B|,$ where $|\cdot|$ is the Lebesgue measure in $\mathbb R^{d}$. Here $f^{\ast}$ is the symmetric decreasing rearrangement of $f$. 
To the best of our knowledge, the result seems new even for the case $p=2$ since we have the general singular nonlinearity $g(u)$.
Also, we consider a similar stationary MEMS problem, but in infinity domain, that is, on the whole $\mathbb R^{d}$:
\begin{equation}\label{MEMSNewton_intro}\left\{\begin{array}{l}
-\Delta u(x)= \frac{\lambda\,f}{(1-u(x))^{2}},\quad  0\leq u(x) < 1,\quad x\in\mathbb R^{d},\, d\geq 3, 
\\
u(x)\longrightarrow 0,\quad |x|\longrightarrow \infty,
\end{array}\right.\end{equation}
where $\lambda >0$ and $f=1$ in $\Omega$ with
$ \text{supp}\,f\subset \Omega \subset \mathbb{R}^{d}.$

To analyse the main difference of the problems \eqref{MEMSLaplacian-Dirichlet} and \eqref{MEMSNewton_intro} let us briefly discuss linear analogues of these problems. A linear analogue of Theorem \ref{prop:1} is so called a Rayleigh-Faber-Krahn inequality.
To recall it let us consider the minimization problem of the first eigenvalue of the Laplacian with
the Dirichlet boundary condition (among domains of given volume):
\begin{equation}\label{Laplacian-Dirichlet}\left\{\begin{array}{l}
-\Delta u(x)=\lambda^{D} u(x),\; x\in\Omega\subset \mathbb R^{d}, \\
u(x)=0,\,\,\,x\in\partial\Omega.
\end{array}\right.\end{equation}
The famous Rayleigh-Faber-Krahn inequality asserts that 
$$\lambda^{D}_{1}(B) \leq\lambda^{D}_{1}(\Omega),$$
for any $\Omega$ with $|\Omega| = |B|$, where $B\subset\mathbb R^{d}$ is a ball and  $|\cdot|$ is the Lebesgue measure in $\mathbb R^{d}$.
Note that an analogue of the Rayleigh-Faber-Krahn inequality for  general convolution type integral operators were given in \cite{Ruzhansky-Suragan:UMN} (see also \cite{Ruzhansky-Suragan:JMAA}).

Similarly, we can consider a linear version of the problem \eqref{MEMSNewton_intro}:
\begin{equation}\label{15}
-\Delta u(x)=\mu u(x), \,\,\,\ x\in\Omega\subset\mathbb{R}^{d},
\end{equation}
with the nonlocal integral boundary condition
\begin{equation}
-\frac{1}{2}u(x)+\int_{\partial\Omega}\frac{\partial\varepsilon_{d}(x-y)}{\partial n_{y}}u(y)d S_{y}-
\int_{\partial\Omega}\varepsilon_{d}(x-y)\frac{\partial u(y)}{\partial n_{y}}d S_{y}=0,\,\,
x\in\partial\Omega,
\label{16}
\end{equation}
where $\varepsilon_{d}$ is the fundamental solution of the Laplacian and  $\frac{\partial}{\partial n_{y}}$ denotes the outer normal
derivative at a point $y$ on the boundary $\partial\Omega$.
The spectral problem \eqref{15}-\eqref{16} is equivalent (see \cite{KS09}) to 
\begin{equation}\label{Newpot}
u(x)=\mu \int_{\Omega}\varepsilon_{d}(x-y)u(y)dy, \; x\in\Omega\subset\mathbb{R}^{d}.
\end{equation}
This also means  
\begin{prop}\cite{KS09}\label{prop_intro}
	The problem \eqref{MEMSNewton_intro} is equivalent to the nonlinear integral problem 
	\begin{equation}\label{nonNewpot}
	u(x)=\lambda\int_{\Omega}\varepsilon_{d}(x-y)\frac{1}{(1-u(y))^{2}}dy, \,\, 0\leq u(x)< 1.
	\end{equation}
\end{prop}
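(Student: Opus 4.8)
The plan is to read \eqref{nonNewpot} as the assertion that $u$ coincides with the Newtonian potential of its own right-hand side, exploiting that on $\mathbb R^{d}$ with $d\ge 3$ convolution with the fundamental solution $\varepsilon_{d}$ inverts $-\Delta$ within the class of functions that vanish at infinity. Write $h:=\lambda f\,(1-u)^{-2}$. Since $\mathrm{supp}\,f$ is compact (it is contained in $\Omega$) and any admissible $u$ is continuous with $\max_{\mathrm{supp}\,f} u<1$, the density $h$ is bounded, compactly supported, and Hölder continuous, inheriting the regularity of $f$ and of $u$; in particular $\int_{\mathbb R^{d}}\varepsilon_{d}(x-y)h(y)\,dy=\int_{\Omega}\varepsilon_{d}(x-y)h(y)\,dy$, so the integral in \eqref{nonNewpot} is well defined.

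For the implication \eqref{MEMSNewton_intro}$\Rightarrow$\eqref{nonNewpot}, given a classical solution $u$ of \eqref{MEMSNewton_intro} I would set $v(x):=\int_{\Omega}\varepsilon_{d}(x-y)h(y)\,dy$. Standard Newtonian potential estimates --- valid because $h\in C^{\alpha}$ has compact support and $d\ge 3$ --- give $v\in C^{2}(\mathbb R^{d})$ with $-\Delta v=h$ in $\mathbb R^{d}$ and the decay $v(x)=O(|x|^{2-d})\to 0$ as $|x|\to\infty$. Then $w:=u-v$ is harmonic on all of $\mathbb R^{d}$ and tends to $0$ at infinity, hence $w\equiv 0$: on any ball $B_{R}$ the maximum principle bounds $|w|$ by $\sup_{\partial B_{R}}|w|$, which tends to $0$ as $R\to\infty$; equivalently, $w$ is bounded and harmonic, so constant by Liouville's theorem, and the constant must be $0$. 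Therefore $u=v$, which is precisely \eqref{nonNewpot}.

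For the converse \eqref{nonNewpot}$\Rightarrow$\eqref{MEMSNewton_intro}, if $u$ solves \eqref{nonNewpot} with $0\le u<1$, then $u=\varepsilon_{d}*h$ with $h=\lambda f(1-u)^{-2}\in L^{\infty}$ compactly supported; a short regularity bootstrap (first $u\in C^{1,\alpha}_{\mathrm{loc}}$, whence $h\in C^{\alpha}$, whence $u\in C^{2}$) yields $-\Delta u=h=\lambda f(1-u)^{-2}$ in $\mathbb R^{d}$, while $u=\varepsilon_{d}*h\to 0$ at infinity. Thus $u$ is a classical solution of \eqref{MEMSNewton_intro}, and combining the two implications gives the stated equivalence (this is the argument of \cite{KS09}, with the nonlinear right-hand side in place of a linear one).

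I expect the only genuinely delicate point --- shared by both directions --- to be the potential-theoretic input: that convolution with $\varepsilon_{d}$ maps a compactly supported Hölder density to a $C^{2}$ function satisfying $-\Delta(\varepsilon_{d}*h)=h$ with $O(|x|^{2-d})$ decay (so that $\Delta$ may legitimately be moved under the integral sign), together with the Liouville-type uniqueness for harmonic functions on $\mathbb R^{d}$ vanishing at infinity. Everything else is bookkeeping, the main point being that $\mathrm{supp}\,f\subset\Omega$ lets one replace the integral over $\mathbb R^{d}$ by an integral over $\Omega$.
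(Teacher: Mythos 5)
Your argument is correct in substance, but it is not the route the paper takes: the paper offers no self-contained proof at all, deducing the proposition from the result of \cite{KS09} that the Newtonian potential $u(x)=\int_{\Omega}\varepsilon_{d}(x-y)\rho(y)\,dy$ is characterized as the solution of $-\Delta u=\rho$ supplemented by the nonlocal integral boundary condition \eqref{16}, so that the equivalence of \eqref{MEMSNewton_intro} with \eqref{nonNewpot} is read off from the linear equivalence \eqref{15}--\eqref{16} $\Leftrightarrow$ \eqref{Newpot} with $\rho=\lambda(1-u)^{-2}f$ frozen as a right-hand side. Your direct proof --- invert $-\Delta$ by convolution with $\varepsilon_{d}$, then kill the harmonic difference $u-v$ by the maximum principle/Liouville since it vanishes at infinity, and bootstrap regularity for the converse --- is the classical potential-theoretic argument and has the merit of being self-contained and of making explicit exactly where $d\geq3$ and the decay condition are used; the paper's citation-based route is shorter and ties the statement to the spectral framework \eqref{Newpot} used later for $\mu_{1}$. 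One caveat in your write-up: since the paper takes $f=1$ in $\Omega$ (essentially $f=\chi_{\Omega}$), the density $h=\lambda f(1-u)^{-2}$ is \emph{not} H\"older continuous across $\partial\Omega$, so ``$h\in C^{\alpha}$, hence $v\in C^{2}$'' is too strong; you only get $v\in C^{1,\alpha}_{\rm loc}$ with $-\Delta v=h$ in the distributional sense (and $C^{2}$ away from $\partial\Omega$), but this suffices: $u-v$ is distributionally harmonic, hence smooth by Weyl's lemma, and your Liouville step goes through unchanged, with ``classical solution'' of \eqref{MEMSNewton_intro} interpreted accordingly.
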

We refer \cite{Ruzhansky-Suragan:Newton} for further discussions and for spectral theory of \eqref{Newpot}. 

As another consequence of our method, in this paper we present similar geometric estimate for the pull-in voltage (upper bound of the spectrum) of the (nonlinear) Dirichlet boundary value problem for a second order uniformly elliptic differential operator
$$L=\sum_{j,k=1}^{d}\frac{\partial}{\partial x_{j}}\left(
a_{jk}(x)\frac{\partial}{\partial x_{k}} \right)$$
with $a_{jk}(x)=a_{kj}(x)$.

In Section \ref{SEC:prel} we briefly discuss some preliminary results, in particular, we recall the celebrated Talenti comparison principle \cite{Tal}, which states that the symmetric decreasing rearrangement (Schwarz rearrangement) of the Newtonian potential of a charge distribution is pointwise smaller than the potential resulting from symmetrizing the charge distribution itself. Talenti's comparison principle can be also extended to the Dirichlet $p$-Laplacian and the Dirichlet uniformly elliptic boundary value problems. Main results of this paper and their proofs will be given in Section \ref{SEC:main}. Talenti's comparison principle plays a key role in the proofs.

\section{Preliminaries}
\label{SEC:prel}

Let $\Omega$ be a measurable bounded domain of $\mathbb{R}^{d}$. An open ball (with origin $0$) $\Omega^{\ast}$ is called a symmetric rearrangement of $\Omega$
if $|B|=|\Omega|$ and
$$
\Omega^{\ast}=B=\left\{ x\in \mathbb{R}^{d}\mid \sigma_{d}|x|^{d}<
|\Omega|\right\},
$$
where $\sigma_{d}=\frac{2\pi^{\frac{d}{2}}}{\Gamma(\frac{d}{2})}$ is the surface area of the unit ball in $\mathbb{R}^{d}$.
Let $u$ be a nonnegative measurable function vanishing at infinity in the sense that all of its positive level sets have a finite measure,
i.e.,
$$
{\rm Vol}\left(\left\{ x|u(x)>t\right\} \right)<\infty,\quad  \forall t>0.
$$
To define a symmetric decreasing rearrangement of $u$ one uses (see, for example \cite{LL}) the layer-cake decomposition, which expresses a nonnegative function $u$ in terms of its level sets in the following way

$$ u(x)=\int_{0}^{\infty}\chi_{\left\{ u(x)>t\right\}}dt, 
$$
where $\chi$ is the characteristic function.
{\it{ Let $u$ be a nonnegative measurable function vanishing at infinity. Then 
		 \begin{equation}\label{1}
		u^{\ast}(x)=\int_{0}^{\infty}\chi_{\left\{ u(x)>t\right\}^{\ast}
		}dt
		 \end{equation}
		is called a symmetric decreasing rearrangement of the function $u$.}}
Note that the symmetric decreasing rearrangement is also sometimes called the Schwarz rearrangement. The simple definition \eqref{1} can be useful in many proofs, for example, if $0\leq v(x)-u(x),\,\, \forall x\in \mathbb{R}^{d}$, then we directly get
$$
u^{\ast}(x)=\int_{0}^{\infty}\chi_{\left\{u(x)>t\right\}^{\ast}
}dt\leq \int_{0}^{\infty}\chi_{\left\{v(x)>t\right\}^{\ast}
}dt=v^{\ast}(x),\,\,\,\,\,\, \forall x\in \mathbb{R}^{d}.
$$
That is, 	if
$$0\leq u(x)\leq v(x),\,\,\,\,\,\, \forall x\in \mathbb{R}^{d},$$
then
$$0\leq v^{\ast}(x)-u^{\ast}(x) ,\,\,\,\,\,\, \forall x\in \mathbb{R}^{d}.$$
Moreover, if $g$ is (nonnegative) increasing, then we have 
 \begin{equation}\label{gu=ug}
	g^{\ast}(u(x))=\int_{0}^{\infty}\chi_{\left\{ g(u(x))>t\right\}^{\ast}
	}dt=\int_{0}^{\infty}\chi_{\left\{g(u^{\ast}(x))>t\right\}
}dt=g(u^{\ast}(x)).
\end{equation}
 
\begin{thm}\label{talenti} [Talenti's comparison principle for the Laplacian ] Consider a (smooth) nonnegative function $f$ with $supp f\subset \Omega\subset \mathbb{R}^{d},\,d\geq3,$ for a bounded set $\Omega$, and its symmetric decreasing rearrangement $f^{\ast}$.
		If solutions $u$ and $v$ of
		$$-\Delta u=f,\quad -\Delta v=f^{\ast},$$
		vanish at infinity, then
		$$u^{\ast}(x)\leq v(x),\quad \forall x\in \mathbb{R}^{d}.$$
\end{thm}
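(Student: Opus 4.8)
The plan is to reproduce Talenti's original argument \cite{Tal}, reducing the claimed pointwise bound to a one-dimensional comparison of the decreasing rearrangements of $u$ and $v$ viewed as functions of the enclosed volume. First I would record the facts that make the statement well posed. Since $d\geq 3$ and $f$ has compact support, the unique solutions that vanish at infinity are the Newtonian potentials $u(x)=\int_{\mathbb{R}^d}\varepsilon_d(x-y)f(y)\,dy$ and $v(x)=\int_{\mathbb{R}^d}\varepsilon_d(x-y)f^{\ast}(y)\,dy$; in particular $u,v\geq 0$, so $u^{\ast}$ is well defined, and $v$ is radially symmetric and radially non-increasing, because the Newtonian potential of a nonnegative radial non-increasing density is of the same type. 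Hence it suffices to compare $u$ and $v$ along a ray from the origin, i.e.\ to compare the distribution functions $\mu(t)=|\{u>t\}|$ and $\tilde\mu(t)=|\{v>t\}|$, equivalently the one-variable rearrangements $u^{\#},v^{\#}$ characterised by $u^{\ast}(x)=u^{\#}(\sigma_d|x|^d/d)$ and similarly for $v$.

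The core of the proof is a differential inequality for $\mu$. For a.e.\ $t$ (a regular value, by Sard's theorem) the superlevel set $\Omega_t=\{u>t\}$ has smooth boundary $\{u=t\}$, and integrating $-\Delta u=f$ over $\Omega_t$ and using the divergence theorem gives $\int_{\{u=t\}}|\nabla u|\,d\mathcal{H}^{d-1}=\int_{\Omega_t}f$. I then combine four standard facts: the Cauchy--Schwarz inequality $\mathcal{H}^{d-1}(\{u=t\})^2\leq\big(\int_{\{u=t\}}|\nabla u|\big)\big(\int_{\{u=t\}}|\nabla u|^{-1}\big)$; the coarea bound $-\mu'(t)\geq\int_{\{u=t\}}|\nabla u|^{-1}\,d\mathcal{H}^{d-1}$; the sharp isoperimetric inequality $\mathcal{H}^{d-1}(\{u=t\})\geq d\,(\sigma_d/d)^{1/d}\mu(t)^{(d-1)/d}$; and the Hardy--Littlewood rearrangement bound $\int_{\Omega_t}f\leq\int_0^{\mu(t)}f^{\#}(s)\,ds$ (apply $\int fg\leq\int f^{\ast}g^{\ast}$ with $g=\chi_{\Omega_t}$). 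After the change of variable $s=\mu(t)$, $t=u^{\#}(s)$ these yield
\[
-\frac{du^{\#}}{ds}(s)\ \leq\ \frac{1}{c_d}\,s^{-2(d-1)/d}\int_0^s f^{\#}(\sigma)\,d\sigma ,\qquad c_d:=d^{2}(\sigma_d/d)^{2/d},
\]
and integrating from $s$ to $\infty$, using $u^{\#}(\infty)=0$, gives an explicit pointwise upper bound for $u^{\#}(s)$.

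Finally I would note that every inequality in the previous step becomes an equality when $u$ is replaced by $v$: the level sets of $v$ are concentric balls, the extremal sets in the isoperimetric inequality; $|\nabla v|$ is constant on each such sphere, so Cauchy--Schwarz is sharp; $v$ is strictly radially decreasing (since $f^{\ast}>0$ near the origin), so the coarea identity holds with equality; and $\int_{\{v>t\}}f^{\ast}=\int_0^{\tilde\mu(t)}(f^{\ast})^{\#}$ because $f^{\ast}$ is already radially decreasing, with $(f^{\ast})^{\#}=f^{\#}$. Hence $v^{\#}(s)$ equals precisely the bound obtained for $u^{\#}(s)$, so $u^{\#}\leq v^{\#}$ on $(0,\infty)$ and therefore $u^{\ast}(x)\leq v(x)$ for all $x$. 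The work is not in the chain of inequalities but in the measure-theoretic bookkeeping: that $\mu$ is locally absolutely continuous, that the critical values and the ``flat parts'' of $u$ (level sets of positive measure, or where $\nabla u=0$ on a set of positive measure) do not spoil the coarea and divergence-theorem identities, and that the change of variables is legitimate. All of this is classical and is carried out exactly as in \cite{Tal}, so I expect it to be the main technical --- though not conceptual --- obstacle.
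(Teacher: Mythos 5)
The paper does not prove Theorem \ref{talenti} at all: it is quoted as a classical result with references to Talenti \cite{Tal}, Burchard \cite{Bur} and Lieb--Loss \cite{LL}, so there is no internal proof to compare against. Your reconstruction is exactly Talenti's original argument (divergence theorem on superlevel sets, Cauchy--Schwarz, coarea, isoperimetry, Hardy--Littlewood, then equality throughout for the radial problem), with the correct constant $c_d=d^{2}(\sigma_d/d)^{2/d}$, and it adapts correctly to the whole-space setting because $u\to 0$ at infinity makes every set $\{u>t\}$, $t>0$, bounded and gives $u^{\#}(\infty)=0$; so the proposal is sound and matches the cited source's approach.
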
 Note that $u$ and $v$ exist, and are uniquely determined by the equation, i.e.
$$u(x)=\int_{\Omega}\varepsilon_{d}(x-y)f(y)dy$$
and
$$v(x)=\int_{B}\varepsilon_{d}(x-y)f^{\ast}(y)dy,$$
where $\varepsilon_{d}(\cdot)$ is the fundamental solution of the Laplacian, that is,
$$\varepsilon_{d}(x-y)=\frac{1}{(d-2)\sigma_{d}|x-y|^{d-2}},\quad d\geq3,$$
and $\sigma_{d}$ is the surface area of $d$-dimensional unit ball.
They are nonnegative since the fundamental solution is nonnegative.
The inequality also holds for nonnegative measurable functions $f$ vanishing when $|x|\rightarrow \infty$.
It is also known that the fundamental solution $\sigma_{d},\,d\geq3,$
 does not change its formula under the symmetric decreasing rearrangement, see e.g.
 Lieb and Loss \cite{LL}.
Talenti's comparison principle can be extended to the Dirichlet boundary value problems of uniformly elliptic second order differential operators and $p$-Laplacian (see, e.g. Section 4.3 of \cite{Bur} as well as \cite{Tal} and \cite{Talenti'77}).

\begin{thm}\label{ptalenti} [Talenti's comparison principle for the Dirichlet $p$-Laplacian] Let $1<p<\infty$. Consider a (smooth) nonnegative function $f$ in a smooth bounded domain $\Omega\subset \mathbb R^{d},d\ge 1$, and its symmetric decreasing rearrangement $f^{\ast}$.
	Then solutions $u$ and $v$ of
	$$-\Delta_{p} u=f\;{\rm in}\,\Omega,\; u|_{\partial \Omega}=0,$$  
	and 
	$$-\Delta_{p} v=f^{\ast}\;{\rm in}\,B,\; v|_{\partial B}=0,$$
satisfy 
	$$u^{\ast}(x)\leq v(x),\quad \forall x\in B.$$
	Here $B$ is a ball centered at the origin with $|B|=|\Omega|,$ where $|\cdot|$ is the  Lebesgue measure in $\mathbb R^{d}$.
\end{thm}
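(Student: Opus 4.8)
The plan is to run Talenti's symmetrization argument in the form adapted to the $p$-Laplacian. Let $u\ge0$ denote the solution in $\Omega$ (nonnegative by the maximum principle since $f\ge0$), and for $t>0$ set $\Omega_t=\{x\in\Omega:u(x)>t\}$ with distribution function $\mu(t)=|\Omega_t|$. First I would test the weak formulation of $-\Delta_p u=f$ against the truncation $\min\{(u-t)^+/h,1\}$ and let $h\to0$, obtaining for a.e.\ $t>0$ the identity
$$\int_{\{u=t\}}|\nabla u|^{p-1}\,dS=\int_{\Omega_t}f\,dx .$$
On the level set $\{u=t\}$ I then write $1=|\nabla u|^{(p-1)/p}\cdot|\nabla u|^{-(p-1)/p}$ and apply Hölder's inequality with exponents $p$ and $p/(p-1)$; together with the coarea identity $-\mu'(t)=\int_{\{u=t\}}|\nabla u|^{-1}\,dS$ this gives
$$P(\Omega_t)\ \le\ \Big(\int_{\{u=t\}}|\nabla u|^{p-1}\,dS\Big)^{1/p}(-\mu'(t))^{(p-1)/p}\ =\ \Big(\int_{\Omega_t}f\,dx\Big)^{1/p}(-\mu'(t))^{(p-1)/p},$$
with $P(\Omega_t)$ the perimeter of $\Omega_t$. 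Combining with the isoperimetric inequality $P(\Omega_t)\ge C_d\,\mu(t)^{(d-1)/d}$, where $C_d$ is the sharp isoperimetric constant of $\mathbb R^d$, and with the Hardy--Littlewood estimate $\int_{\Omega_t}f\,dx\le F(\mu(t))$, where $F(m):=\int_0^m f^\#(\sigma)\,d\sigma$ and $f^\#$ is the decreasing rearrangement of $f$ on $(0,|\Omega|)$, then raising to the power $p$ and subsequently to the power $1/(p-1)$, I arrive at the pointwise differential inequality
$$-\mu'(t)\ \ge\ \Big(\frac{C_d^{\,p}\,\mu(t)^{p(d-1)/d}}{F(\mu(t))}\Big)^{1/(p-1)}\qquad\text{for a.e. }t>0 .$$

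Next I would observe that the solution $v$ on the ball $B$ with the radially symmetric datum $f^\ast$ is itself radially symmetric and nonincreasing in $|x|$: rotational symmetry follows from rotation invariance of $-\Delta_p$ together with uniqueness of the Dirichlet problem (valid since $-\Delta_p$ is strictly monotone for $1<p<\infty$), and monotonicity follows from the radial form $r^{d-1}|v'(r)|^{p-1}=\int_0^r\rho^{d-1}f^\ast(\rho)\,d\rho\ge0$. As a consequence, every inequality used in the previous step is an equality for $v$: its superlevel sets $\{v>t\}$ are concentric balls (equality in the isoperimetric inequality), $|\nabla v|$ is constant on each of them (equality in Hölder), and $\int_{\{v>t\}}f^\ast\,dx=F(\nu(t))$ exactly, where $\nu(t)=|\{v>t\}|$. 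Hence
$$-\nu'(t)\ =\ \Big(\frac{C_d^{\,p}\,\nu(t)^{p(d-1)/d}}{F(\nu(t))}\Big)^{1/(p-1)}\qquad\text{for a.e. }t>0 .$$

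Finally I would pass to the inverse functions. Let $u^\#$ and $v^\#$ be the decreasing rearrangements of $u$ and $v$ on $(0,|\Omega|)$, i.e.\ the generalized inverses of $\mu$ and $\nu$, so that $u^\ast$ and $v$ are precisely the radial functions on $B$ having these profiles; since $|B|=|\Omega|$ and $u,v$ vanish on their respective boundaries, $u^\#=v^\#=0$ at $s=|\Omega|$. Differentiating the relation $s=\mu(u^\#(s))$ and inserting the two displays converts them into
$$-\frac{d}{ds}u^\#(s)=\frac{1}{-\mu'(u^\#(s))}\ \le\ \Big(\frac{F(s)}{C_d^{\,p}\,s^{p(d-1)/d}}\Big)^{1/(p-1)}=-\frac{d}{ds}v^\#(s)\qquad\text{for a.e. }s\in(0,|\Omega|),$$
and integrating from $s$ to $|\Omega|$ yields $u^\#(s)\le v^\#(s)$ for every $s\in(0,|\Omega|)$. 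Since $v$ is radially decreasing this is exactly the claimed inequality $u^\ast(x)\le v(x)$ for all $x\in B$.

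The conceptual content is then merely a first-order ODE comparison; the genuine obstacle I anticipate is the measure-theoretic bookkeeping — justifying the coarea and divergence-theorem manipulations, as well as the a.e.\ differentiability and absolute continuity of $\mu$, given that $u$ is only $C^{1,\alpha}$ and may have critical points or level sets of positive measure. I would handle this by systematically using the weak formulation of $-\Delta_p u=f$ in place of formal integrations over level surfaces, as done for the $p$-Laplacian in \cite{Tal}, \cite{Talenti'77} and Section~4.3 of \cite{Bur}, treating with the usual care the parts of the level sets on which $\nabla u=0$ and the intervals of $t$ on which $\mu$ is constant; the rest of the argument then proceeds as above.
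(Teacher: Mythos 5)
The paper does not prove this statement at all: Theorem \ref{ptalenti} is quoted as a known result, with the reader referred to \cite{Tal}, \cite{Talenti'77} and Section~4.3 of \cite{Bur}. Your argument is precisely the classical Talenti symmetrization scheme from those references (level-set identity from the weak formulation with truncated test functions, H\"older on $\{u=t\}$, isoperimetric inequality, Hardy--Littlewood, then comparison with the explicitly radial solution on the ball), so it is the ``right'' proof and the outline is sound. Two small points of rigor, both of which you essentially flag yourself: first, the coarea relation should be stated as the inequality $-\mu'(t)\ge\int_{\{u=t\}}|\nabla u|^{-1}\,dS$ for a.e.\ $t$ (critical points and level sets of positive measure can create a strict gap), which is harmless since it points in the direction you need; second, the inversion step obtained by differentiating $s=\mu(u^\#(s))$ is fragile when $\mu$ has plateaus or jumps, and the standard repair is to integrate the differential inequality $1\le(-\mu'(t))\bigl(F(\mu(t))/(C_d^{\,p}\mu(t)^{p(d-1)/d})\bigr)^{1/(p-1)}$ in $t$ and change variables, which gives directly
\begin{equation*}
u^\#(s)\ \le\ \int_s^{|\Omega|}\Bigl(\frac{F(\sigma)}{C_d^{\,p}\,\sigma^{p(d-1)/d}}\Bigr)^{1/(p-1)}d\sigma\ =\ v^\#(s),
\end{equation*}
the last equality being an explicit computation from the radial ODE for $v$ (so you do not even need the ``equality in every step'' discussion for $v$, which can degenerate if $f^\ast$ vanishes near the origin). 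With that bookkeeping done as in Talenti's papers or Burchard's notes, your proof is complete and matches the literature the paper relies on.
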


\section{Main results}
\label{SEC:main}

\subsection{The pull-in voltage for the $p$-MEMS problem.} We consider the pull-in voltage for the $p$-MEMS problem with $1<p<\infty$:  
\begin{equation}\label{pMEMSLaplacian-Dirichlet}\left\{\begin{array}{l}
-\Delta_p u(x)=\lambda f(x)g(u),\; 0\leq u(x) < 1,\,x\in\Omega\subset \mathbb R^{d},d\ge 1, \\
u(x)=0,\,\,\,x\in\partial\Omega,
\end{array}\right.\end{equation}
where $\lambda >0$ (is the applied voltage) and 
(the permittivity profile) $f$ is a smooth positive function. The non-linearity $g(u)$ is a non-decreasing, positive function defined on $[0,1)$
with a singularity at $u=1$:
\begin{equation}
\underset{u\rightarrow 1^{-}}{\lim}  g(u)=+\infty.
\end{equation}
We also assume that $(\lambda f(x)g(u))^{\ast}=\lambda f^{\ast}(x)g(u^{\ast})$, where $\ast$  is for the symmetric decreasing rearrangement. For instance,  $fg(u)=\sigma(1-u)^{-m},\;m\in \mathbb{N},\;\sigma=const>0, $ 
satisfies the all above assumptions.  
As usual, the $p$-MEMS pull-in voltage is defined as
$$\lambda^{\ast}_{p}:=\lambda^{\ast}_{p}(\Omega,f)=\sup\{\lambda>0 \;|\; (3.1) \,\text{possesses at least one classical solution}\}.$$

The main result of this paper is 
\begin{thm}\label{pTHM_main} A ball $B$ (with $f^{\ast}$) is a minimizer of the Dirichlet $p$-MEMS pull-in voltage  among all domains of given volume, i.e.
	$$
	\lambda^{\ast}_{p}(B,f^{\ast})\leq \lambda^{\ast}_{p}(\Omega,f), \; 1<p<\infty,
	$$
	for an arbitrary smooth bounded domain $\Omega\subset \mathbb R^{d},\,d\geq 1,$ with $|\Omega|=|B|,$ where $|\cdot|$ is the  Lebesgue measure in $\mathbb R^{d}$.
\end{thm}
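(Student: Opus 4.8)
The plan is to run, on $\Omega$ and on the ball $B$ in parallel, the standard monotone iteration that produces the minimal solution, and to use Talenti's comparison principle for the $p$-Laplacian (Theorem~\ref{ptalenti}) to show that, after symmetric decreasing rearrangement, every iterate on $\Omega$ stays below the minimal solution on $B$. Since the latter is bounded strictly below $1$ whenever $\lambda$ lies below the pull-in voltage of $B$, the iteration on $\Omega$ cannot ``pull in'' either, so a classical solution of \eqref{pMEMSLaplacian-Dirichlet} on $\Omega$ exists for the same $\lambda$. Taking the supremum over such $\lambda$ then gives $\lambda^{\ast}_{p}(B,f^{\ast})\leq\lambda^{\ast}_{p}(\Omega,f)$.

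First I would fix $\lambda<\lambda^{\ast}_{p}(B,f^{\ast})$ and recall from \cite{CES} that for such $\lambda$ the $p$-MEMS problem on $B$ with profile $f^{\ast}$ at voltage $\lambda$ has a classical solution $v_{\lambda}$; since $v_{\lambda}$ is continuous on $\bar B$ with $v_{\lambda}<1$, we have $M:=\max_{\bar B}v_{\lambda}<1$. On $\Omega$ I would define the iterates $w_{0}\equiv 0$ and, for $n\geq 0$, let $w_{n+1}$ be the unique nonnegative solution of $-\Delta_{p}w_{n+1}=\lambda f\,g(w_{n})$ in $\Omega$ with $w_{n+1}|_{\partial\Omega}=0$; this is well posed and $w_{n+1}\in C^{1,\alpha}(\bar\Omega)$ as long as $w_{n}<1$ (so that $g(w_{n})$ is bounded and nonnegative), and by the weak comparison principle for $-\Delta_{p}$ together with the monotonicity of $g$ the sequence $(w_{n})$ is nondecreasing.

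The heart of the argument is the claim that $w_{n}^{\ast}\leq v_{\lambda}$ on $B$ for every $n$, proved by induction on $n$. It is trivial for $n=0$. Assume $w_{n}^{\ast}\leq v_{\lambda}<1$; then $w_{n}<1$, so $w_{n+1}$ is defined, and Theorem~\ref{ptalenti} applied to the nonnegative charge $\lambda f\,g(w_{n})$ — whose symmetric decreasing rearrangement equals $\lambda f^{\ast}g(w_{n}^{\ast})$ by the structural hypothesis combined with \eqref{gu=ug} — gives $w_{n+1}^{\ast}\leq\zeta$, where $\zeta$ solves $-\Delta_{p}\zeta=\lambda f^{\ast}g(w_{n}^{\ast})$ in $B$ with $\zeta|_{\partial B}=0$. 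Since $w_{n}^{\ast}\leq v_{\lambda}$ and $g$ is nondecreasing, $-\Delta_{p}\zeta=\lambda f^{\ast}g(w_{n}^{\ast})\leq \lambda f^{\ast}g(v_{\lambda})=-\Delta_{p}v_{\lambda}$ in $B$, so the weak comparison principle yields $\zeta\leq v_{\lambda}$ and hence $w_{n+1}^{\ast}\leq v_{\lambda}$, closing the induction. Consequently $\|w_{n}\|_{L^{\infty}(\Omega)}=\|w_{n}^{\ast}\|_{L^{\infty}(B)}\leq M<1$ for all $n$, so $g(w_{n})\leq g(M)$ and the right-hand sides $\lambda f\,g(w_{n})$ are bounded in $L^{\infty}(\Omega)$; by $C^{1,\alpha}$ regularity for the Dirichlet $p$-Laplacian the $w_{n}$ are bounded in $C^{1,\alpha}(\bar\Omega)$ and, being monotone, converge to a classical solution $u_{\lambda}$ of \eqref{pMEMSLaplacian-Dirichlet} on $\Omega$ with $0\leq u_{\lambda}\leq M<1$. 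Thus $\lambda\leq\lambda^{\ast}_{p}(\Omega,f)$, and letting $\lambda\uparrow\lambda^{\ast}_{p}(B,f^{\ast})$ finishes the proof.

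The main difficulty is not conceptual — the essential content is the one-line induction above — but lies in the iteration machinery for all $1<p<\infty$: well-posedness of the Dirichlet $p$-Laplacian with bounded right-hand side, the weak comparison principle, and interior/boundary $C^{1,\alpha}$ a priori estimates (DiBenedetto, Tolksdorf, Damascelli), which are exactly what is needed to pass to the limit and to guarantee that the limit stays strictly below $1$; for $p=2$ all of this is classical. A secondary point to verify is that the hypothesis $(\lambda f g(u))^{\ast}=\lambda f^{\ast}g(u^{\ast})$ is applicable to the iterates $w_{n}$, which belong to the same class of nonnegative functions bounded by $1$ as the admissible states, so that Theorem~\ref{ptalenti} may legitimately be invoked at every step.
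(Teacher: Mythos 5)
Your proof is correct and follows essentially the same route as the paper: monotone Picard iteration, Talenti's comparison principle for the Dirichlet $p$-Laplacian applied to each iterate via the hypothesis $(\lambda f g(u))^{\ast}=\lambda f^{\ast}g(u^{\ast})$, and the weak comparison principle for $-\Delta_{p}$ combined with the monotonicity of $g$. The only (harmless) difference is that you compare the rearranged iterates on $\Omega$ directly with a fixed solution $v_{\lambda}$ on $B$ instead of running the parallel iteration on $B$ as the paper does, which if anything makes the final step --- uniform bound $M<1$, $C^{1,\alpha}$ estimates, and passage to a classical solution --- more explicit than the paper's concluding convergence argument.
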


\begin{proof}[Proof of Theorem \ref{pTHM_main}]
	 In \cite{CES} it was proved that $\lambda^{\ast}_{p}<\infty$ and for every $\lambda \in (0,\lambda^{\ast}_{p})$ there is a minimal (and semi-stable) solution $u_{\lambda}$ (i.e. $u_{\lambda}$ is the smallest positive solution of \eqref{pMEMSLaplacian-Dirichlet} in a pointwise sense). Consider the following Picard iteration scheme   
	\begin{equation}\label{psequence}
	-\Delta_p u_{m}(x)=\lambda f(x)g(u_{m-1}),\;u_{0}(x)\equiv0,\;m=1,2,\ldots ,
	\end{equation}
	with $u_{m}=0$ on the boundary $\partial\Omega$ of the smooth bounded domain $\Omega$.  The sequence converges uniformly to a positive solution $u_{\lambda}$ satisfying  $u\geq u_{\lambda}$ in $\Omega$ (see \cite{CES}), where $u$ is a positive solution of \eqref{pMEMSLaplacian-Dirichlet}. 
	Consider the following two sequences 
	\begin{equation}\label{psequence1}
	-\Delta_p u_{n}(x)=\lambda f(x)g(u_{n-1})\quad{\rm in}\,\Omega,\;u_{n}(x)|_{\partial\Omega}=0, \;n=1,2,\ldots ,
	\end{equation}
	and 
	\begin{equation}\label{psequence2}
	-\Delta_p v_{n}(x)=\lambda f^{\ast}(|x|)g(v_{n-1})\quad{\rm in}\,B,\; v_{n}(x)|_{\partial B}=0, \;n=1,2,\ldots ,
	\end{equation}
	with $u_{0}\equiv0$ and $v_{0}\equiv0$. We have 
	$$-\Delta_{p} u_{1}(x)=\lambda f(x) g(0)\quad{\rm in}\,\Omega,\; u_{1}(x)|_{\partial\Omega}=0,$$
	and
	 $$-\Delta_{p} v_{1}(x)=\lambda f^{\ast}(|x|)g(0)\quad{\rm in}\,B,\; v_{1}(x)|_{\partial B}=0.$$
	Here $f^{\ast}$ is the symmetric decreasing rearrangement of the positive function $f$.
	Therefore, by Talenti's comparison principle for the Dirichlet $p$-Laplacian for $1<p<\infty$ (see Theorem \ref{ptalenti}) we obtain 
	\begin{equation} \label{pk=1}
	u_{1}^{\ast}(x)\leq v_{1}(x),\quad \forall x\in B,
	\end{equation}
	where $B$ is the ball centered at the origin with $|B|=|\Omega|.$
	We also have 
	\begin{equation}\label{psequence3}
	-\Delta_{p} u_{2}=\lambda f g(u_{1})\quad{\rm in}\,\Omega,\; u_{2}|_{\partial\Omega}=0.
	\end{equation}
	In addition, let us consider 
	\begin{equation*}	
		-\Delta_{p}\tilde{v}_{2}(x)=\lambda (f g(u_{1}))^{\ast}\quad{\rm in}\,B,\; \tilde{v}_{2}|_{\partial B}=0,
	\end{equation*}
	that is, 
	\begin{equation}
	\label{psequence4}
	-\Delta_{p}\tilde{v}_{2}(x)=\lambda f^{\ast} g(u^{\ast}_{1})\quad{\rm in}\,B,\; \tilde{v}_{2}|_{\partial B}=0.
	\end{equation}
	By Talenti's comparison principle for \eqref{psequence3} and \eqref{psequence4} we obtain 
	$$u_{2}^{\ast}(x)\leq \tilde{v}_{2}(x), \quad \forall x\in B.$$
	By using \eqref{pk=1} we get 
	\begin{equation*}	
	-\Delta_{p}\tilde{v}_{2}(x)=\lambda f^{\ast} g(u^{\ast}_{1})\leq \lambda f^{\ast} g(v_{1})=-\Delta_{p}v_{2}(x),
	\end{equation*}
	in $B$, that is, by the comparison principle for the operator $-\Delta_{p}$ (see, e.g. \cite{GS}) we get 
	$$\tilde{v}_{2}(x)\leq v_{2}(x), \quad \forall x\in B.$$ 
	This gives
	$$u_{2}^{\ast}(x)\leq v_{2}(x), \quad \forall x\in B.$$
	Further, by repeating this process we arrive at
	$u^{\ast}_{n}\leq v_{n}$ in $B$ for all $n\geq0,$  that is, $\underset{B}{\rm {max}}\, u^{\ast}_{n} \leq\underset{B}{\rm {max}}\, v_{n}$ for all $n\geq0.$
	Since $\underset{B}{\rm {max}}\, u^{\ast}_{n}= \underset{\Omega}{\rm {max}}\, u_{n}$, it means that 
	for a given $\lambda$ if $\{v_{n}\}$ converges, then the sequence $\{u_{n}\}$ is also convergent. This fact proves $
	\lambda^{\ast}_p(B,f^{\ast})\leq \lambda^{\ast}_{p}(\Omega, f)
	$.
\end{proof}

Since the singular nonlinearity $g$ belongs to a very general class Theorem \ref{pTHM_main} implies new results even for the case $p=2$. For example, it gives a new geometric pull-in voltage estimate for the electrostatic MEMS problem
with effects of Casimir force (see \cite{Lai}):
\begin{equation}\left\{\begin{array}{l}
-\Delta u(x)=\lambda \left(\frac{1}{(1-u(x))^{2}}+ \frac{\sigma}{(1-u(x))^{4}}\right),\; 0\leq u(x) < 1,\,x\in\Omega\subset \mathbb R^{d}, \\
u(x)=0,\,\,\,x\in\partial\Omega.
\end{array}\right.\end{equation}
Here $\sigma=const>0$, the second term on the right-hand side describes the Casimir force. It is easy to see that the assumptions of Theorem \ref{pTHM_main} are satisfied, since
the right hand side is an increasing positive function of $u$. 

Now we demonstrate our method to the multidimensional MEMS problems in the whole Euclidean space $\mathbb R^{d}$ and nonlinear Dirichlet boundary value problems of uniformly elliptic differential operators.

\subsection{The pull-in voltage for the Newtonian potential}

We consider the pull-in voltage for the stationary deflection of an infinity elastic membrane
satisfying  

\begin{equation}\label{MEMSNewton}\left\{\begin{array}{l}
-\Delta u(x)=\lambda \frac{f(x)}{(1-u(x))^{2}},\quad  0\leq u(x) < 1,\quad x\in\mathbb R^{d},\, d\geq 3, 
\\
u(x)\longrightarrow 0,\quad |x|\longrightarrow \infty,
\end{array}\right.\end{equation}
where $\lambda >0$ is the applied voltage and the permittivity profile $f$ is a constant with finite support, that is, $f=1$ in $\Omega$ with
$ \text{supp}\,f\subset \Omega \subset \mathbb{R}^{d}.$

As usual, the pull-in voltage is defined as
$$\lambda^{\ast}(\Omega)=\sup\{\lambda>0 \;|\; (3.1) \,\text{possesses at least one classical solution}\}.$$

\begin{thm} \label{THM:1}
There exists a positive pull-in voltage $\lambda^{\ast}< \infty$ such that
\begin{itemize}
	\item[a)] For any $\lambda<\lambda^{\ast}$, there exists at least one solution of \eqref{MEMSNewton}.
	
	\item[b)] For any $\lambda>\lambda^{\ast}$, there is no solution of \eqref{MEMSNewton}.	
\end{itemize}
	\end{thm}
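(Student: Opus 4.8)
The plan is to work with the equivalent nonlinear integral formulation of Proposition~\ref{prop_intro},
$$u(x)=\lambda\int_{\Omega}\varepsilon_{d}(x-y)\frac{1}{(1-u(y))^{2}}\,dy,\qquad 0\le u(x)<1,$$
and to run a monotone Picard iteration with the nonnegative, locally integrable kernel $\varepsilon_{d}$. Set $w_{0}(x):=\int_{\Omega}\varepsilon_{d}(x-y)\,dy$; since $d\ge3$ and $\Omega$ is bounded, $w_{0}$ is continuous, strictly positive on $\mathbb R^{d}$, bounded, and $w_{0}(x)\to0$ as $|x|\to\infty$ --- this is precisely where the hypothesis $d\ge3$ enters. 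Define $u_{0}\equiv0$ and $u_{m}(x)=\lambda\int_{\Omega}\varepsilon_{d}(x-y)(1-u_{m-1}(y))^{-2}\,dy$. As $t\mapsto(1-t)^{-2}$ is increasing on $[0,1)$ and $\varepsilon_{d}\ge0$, an induction gives $0=u_{0}\le u_{1}\le u_{2}\le\cdots$ pointwise, provided each $u_{m}$ stays below $1$.

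The crucial point is a uniform barrier strictly below the singularity. I would verify that $\bar u:=2\lambda w_{0}$ is a supersolution of the integral equation whenever $2\lambda\|w_{0}\|_{\infty}\le 1-\tfrac{1}{\sqrt2}$: for such $\lambda$ one has $(1-\bar u(y))^{-2}\le2$ for $y\in\Omega$, hence $\lambda\int_{\Omega}\varepsilon_{d}(x-y)(1-\bar u(y))^{-2}\,dy\le2\lambda w_{0}(x)=\bar u(x)$. An induction then yields $u_{m}\le\bar u\le1-\tfrac{1}{\sqrt2}<1$ for all $m$, so by the monotone convergence theorem $u_{m}\uparrow u$ pointwise with $u$ solving the integral equation and $0\le u\le1-\tfrac{1}{\sqrt2}$; by Proposition~\ref{prop_intro} this $u$ is a solution of \eqref{MEMSNewton}. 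Consequently the solvability set $S:=\{\lambda>0:\eqref{MEMSNewton}\text{ admits a solution}\}$ is nonempty and contains a right-neighbourhood of the origin; put $\lambda^{\ast}:=\sup S$, so $\lambda^{\ast}>0$.

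It remains to see that $S=(0,\lambda^{\ast})$ or $(0,\lambda^{\ast}]$ with $\lambda^{\ast}<\infty$, which gives a) and b) at once. If $u$ solves \eqref{MEMSNewton} at level $\lambda$ and $0<\lambda'<\lambda$, then $u$ is a supersolution and $0$ a subsolution of the $\lambda'$-problem (in integral form $u(x)\ge\lambda'\int_{\Omega}\varepsilon_{d}(x-y)(1-u(y))^{-2}\,dy$), and the monotone iteration squeezed between $0$ and $u$ converges to a solution at $\lambda'$; hence $S$ is an interval containing $0$ on the left, and in particular $(0,\lambda^{\ast})\subseteq S$, which is a). For finiteness, any solution $u$ at level $\lambda$ satisfies, from the integral equation and $(1-u)^{-2}\ge1$, the bound $u(x)\ge\lambda\int_{\Omega}\varepsilon_{d}(x-y)\,dy=\lambda w_{0}(x)$; since $m_{0}:=\min_{\overline{\Omega}}w_{0}>0$, the constraint $u<1$ forces $\lambda m_{0}<1$, i.e. $\lambda<1/m_{0}$. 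Therefore $\lambda^{\ast}\le1/m_{0}<\infty$, and for every $\lambda>\lambda^{\ast}$ no solution exists, which is b).

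I expect the main obstacle to be the barrier step of the second paragraph: one must trap all Picard iterates uniformly below the singular value $u=1$, and this hinges on the boundedness of the Newtonian potential $w_{0}$, which holds precisely because $d\ge3$ (for $d=2$ the logarithmic kernel is unbounded and the argument breaks down). The remaining ingredients --- monotonicity of the iteration, passage to the limit via monotone convergence, and the interval structure of $S$ --- are routine once the barrier is secured.
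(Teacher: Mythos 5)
Your argument is correct, and it follows the paper's overall framework (pass to the integral equation of Proposition \ref{prop_intro}, build solutions by monotone Picard iteration squeezed between a subsolution $0$ and a supersolution, define $\lambda^{\ast}$ as the supremum of the solvability set); in particular your treatment of part a) --- using a solution $u_{\tilde\lambda}$ at a level $\tilde\lambda\in(\lambda,\lambda^{\ast})$ as a supersolution for the $\lambda$-problem --- is exactly the paper's sub/supersolution step. Where you genuinely diverge is in the two quantitative ingredients. For existence at small $\lambda$, the paper simply invokes the implicit function theorem at the trivial branch $(\lambda,u)=(0,0)$, whereas you construct the explicit barrier $\bar u=2\lambda w_{0}$ with $w_{0}(x)=\int_{\Omega}\varepsilon_{d}(x-y)\,dy$ and trap the iterates below $1-\tfrac{1}{\sqrt2}$; this is more elementary, makes the role of $d\ge3$ (boundedness and decay of $w_{0}$) completely explicit, and yields a quantitative lower bound on $\lambda^{\ast}$ rather than a mere neighbourhood of $0$. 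For finiteness, the paper multiplies the integral equation by the first eigenfunction $\phi_{1}$ of the Newtonian potential and uses $\mu_{1}>0$, $\phi_{1}>0$ (a cited spectral result) to obtain the bound \eqref{estimate}, which ties in with the later Proposition on upper bounds; you instead use the crude kernel estimate $(1-u)^{-2}\ge1$ to get $u\ge\lambda w_{0}$ and conclude $\lambda<1/\min_{\overline{\Omega}}w_{0}$ from $u<1$, which is simpler and self-contained, at the price of a cruder constant and no link to the spectral quantity $\mu_{1}(\Omega)$. Both routes share the same (unaddressed in the paper as well) regularity gloss in passing from the integral equation back to a classical solution of \eqref{MEMSNewton}, so this is not a gap relative to the paper's standard.
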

	
\begin{proof}[Proof of Theorem \ref{THM:1}]
By Proposition \ref{prop_intro}
the problem \eqref{MEMSNewton} is equivalent to the nonlinear integral problem 
\eqref{nonNewpot}. Thus, since \eqref{nonNewpot} has the trivial solution $u=0$ with $\lambda=0$, by the implicit function theorem \eqref{nonNewpot} has a solution.
In addition, since the fundamental solution $\varepsilon_{d}$ is positive, the integral on the right hand sight of \eqref{nonNewpot} is positive. This means
that $\lambda$ must be positive, that is, $0<\lambda<\lambda^{\ast}.$  Now we need to show that 
$\lambda^{\ast}< \infty.$ Let $0\leq u(x)< 1$ be a solution 
of \eqref{nonNewpot}. We also use the following known fact (see \cite{Ruzhansky-Suragan:Newton}):
The first eigenvalue $\mu_{1}$ of the spectral problem 
\begin{equation}\label{sp_for_NP}
\phi_{1}(x)=\mu_{1}\int_{\Omega}\varepsilon_{d}(x-y)\phi_{1}(y)dy
\end{equation}  
is simple and positive as well as the corresponding eigenfunction $\phi_{1}$ can be chosen positive. Thus, let us multiply  \eqref{nonNewpot} by $\phi_{1}$ and integrate over $\Omega$, then we have 

	\begin{equation*}
\int_{\Omega}u(x)\phi_{1}(x)dx=
\lambda\int_{\Omega}\int_{\Omega}\varepsilon_{d}(x-y)\frac{1}{(1-u(y))^{2}}dy\phi_{1}(x)dx, \,\, 0\leq u(x)< 1.
\end{equation*}
 By \eqref{sp_for_NP} we obtain 
 	\begin{equation*}
 \int_{\Omega}u(x)\phi_{1}(x)dx=
 \frac{\lambda}{\mu_{1}}\int_{\Omega}\frac{\phi_{1}(y)}{(1-u(y))^{2}}dy, \,\, 0\leq u(x)< 1,
 \end{equation*}
 that is, 
 	\begin{equation*}
 \lambda=
 \frac{\mu_{1}\int_{\Omega}u(x)\phi_{1}(x)dx}{\int_{\Omega}\frac{\phi_{1}(y)}{(1-u(y))^{2}}dy}\leq  \frac{\mu_{1}\int_{\Omega}\phi_{1}(x)dx}{\int_{\Omega}\phi_{1}(y)dy}.
 \end{equation*}
 This means 
 	\begin{equation}\label{estimate}
 \lambda^{\ast}\leq  \frac{\mu_{1}\int_{\Omega}\phi_{1}(x)dx}{\int_{\Omega}\phi_{1}(y)dy}< \infty,
 \end{equation}
and there is no solution 
 of \eqref{nonNewpot} for any $\lambda>\lambda^{\ast}.$
 By the definition of $\lambda^{\ast}$ for any $\lambda\in (0,\lambda^{\ast})$ there exists 
 $\tilde{\lambda}\in (\lambda,\lambda^{\ast})$ for which 
 \eqref{nonNewpot} has a solution $u_{\tilde{\lambda}}$, that is, 
 	\begin{equation}\label{nonNewpot_tilde}
 u_{\tilde{\lambda}}(x)=\tilde{\lambda}\int_{\Omega}\varepsilon_{d}(x-y)
 \frac{1}{(1-u_{\tilde{\lambda}}(y))^{2}}dy\geq \lambda\int_{\Omega}\varepsilon_{d}(x-y)
 \frac{1}{(1-u_{\tilde{\lambda}}(y))^{2}}dy,
 \end{equation}
 
 This also means that $u_{\tilde{\lambda}}$ is a supsolution of 
 \eqref{nonNewpot_tilde} for the parameter $\lambda$.   On the other hand, 
 since 
 	\begin{equation} 0\leq \lambda\int_{\Omega}\varepsilon_{d}(x-y)dy,
 \end{equation}
 $u\equiv 0$ is a subsolution of 
 \begin{equation} u_{\tilde{\lambda}}(x)\leq \lambda\int_{\Omega}\varepsilon_{d}(x-y)
 \frac{1}{(1-u_{\tilde{\lambda}}(y))^{2}}dy.
\end{equation}
Therefore, by the method of sub- and supsolutions (see the proof of \cite[Theorem 2.1.1]{EGG}) we prove existence of a solution $u_{\lambda}$ of \eqref{nonNewpot} for any $\lambda\in (0,\lambda^{\ast})$.
\end{proof}
Now we are ready to prove the following result. 
	\begin{thm}\label{THM_main} We have 
	$$
	\lambda^{\ast}(f^{\ast})\leq \lambda^{\ast}(f)
	$$
	for the constant permittivity profile $f=1$ in a smooth bounded domain $\Omega$ satisfying the assumption 
	$ \text{supp}\,f\subset \Omega \subset \mathbb{R}^{d},\, d\geq3.$ 
\end{thm}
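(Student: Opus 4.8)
The approach I would take is to mirror the proof of Theorem~\ref{pTHM_main}, with the Dirichlet $p$-Laplacian replaced by the Newtonian potential on $\mathbb R^{d}$ and Theorem~\ref{ptalenti} replaced by Talenti's comparison principle for the Laplacian, Theorem~\ref{talenti}. By Theorem~\ref{THM:1} the pull-in voltages are finite and positive; writing $B=\Omega^{\ast}$ for the centered ball with $|B|=|\Omega|$, on which $f^{\ast}\equiv1$, we have $\lambda^{\ast}(f)=\lambda^{\ast}(\Omega)$ and $\lambda^{\ast}(f^{\ast})=\lambda^{\ast}(B)$, so it is enough to show $\lambda\le\lambda^{\ast}(\Omega)$ for every fixed $\lambda\in(0,\lambda^{\ast}(B))$. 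Using Proposition~\ref{prop_intro} I would pass to the equivalent integral equation \eqref{nonNewpot} and introduce the two Picard iterations $u_{0}\equiv v_{0}\equiv0$ and, for $n\ge1$, $u_{n}(x)=\lambda\int_{\Omega}\varepsilon_{d}(x-y)(1-u_{n-1}(y))^{-2}\,dy$, $v_{n}(x)=\lambda\int_{B}\varepsilon_{d}(x-y)(1-v_{n-1}(y))^{-2}\,dy$. Exactly as in the proof of Theorem~\ref{THM:1} (sub-/supersolution method, the supersolution for the parameter $\lambda$ being a solution at a slightly larger parameter $<\lambda^{\ast}(B)$), the sequence $\{v_{n}\}$ is nondecreasing, each $v_{n}$ is well defined, and $v_{n}\to v_{\lambda}$ uniformly on $\overline B$ with $\sup_{B}v_{\lambda}<1$.

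The core of the argument is the claim that $u_{n}^{\ast}(x)\le v_{n}(x)$ for all $x\in\mathbb R^{d}$ and all $n$, which I would prove by induction just as in Theorem~\ref{pTHM_main}. For $n=1$, $u_{1}$ and $v_{1}$ solve $-\Delta u_{1}=\lambda f$ and $-\Delta v_{1}=\lambda f^{\ast}$ on $\mathbb R^{d}$ and vanish at infinity, so Theorem~\ref{talenti} gives $u_{1}^{\ast}\le v_{1}$. For the inductive step, with $g(t)=(1-t)^{-2}$ the iterate $u_{n+1}$ solves $-\Delta u_{n+1}=\lambda f\,g(u_{n})$; letting $\tilde v_{n+1}$ solve $-\Delta\tilde v_{n+1}=\lambda(f g(u_{n}))^{\ast}=\lambda f^{\ast}g(u_{n}^{\ast})$ (rearrangement hypothesis together with \eqref{gu=ug}), Theorem~\ref{talenti} gives $u_{n+1}^{\ast}\le\tilde v_{n+1}$, and since $g$ is increasing, $u_{n}^{\ast}\le v_{n}$ by the induction hypothesis, and $\varepsilon_{d}>0$ because $d\ge3$, we get $\tilde v_{n+1}(x)=\lambda\int_{B}\varepsilon_{d}(x-y)g(u_{n}^{\ast}(y))\,dy\le\lambda\int_{B}\varepsilon_{d}(x-y)g(v_{n}(y))\,dy=v_{n+1}(x)$, hence $u_{n+1}^{\ast}\le v_{n+1}$. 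Here the positivity of the fundamental solution plays the role that the comparison principle for $-\Delta_{p}$ played in the proof of Theorem~\ref{pTHM_main}.

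To conclude, since the symmetric decreasing rearrangement preserves the distribution function and hence the essential supremum, $\sup_{\mathbb R^{d}}u_{n}=\sup_{\mathbb R^{d}}u_{n}^{\ast}\le\sup_{\mathbb R^{d}}v_{n}\le\sup_{B}v_{\lambda}<1$, so the iterates $u_{n}$ stay uniformly bounded away from $1$; being nondecreasing (monotone Picard iteration from $u_{0}\equiv0$ with $g$ increasing) they converge, and the limit is a solution of \eqref{nonNewpot}, i.e.\ of \eqref{MEMSNewton}, on $\Omega$. Thus $\lambda\le\lambda^{\ast}(\Omega)$ for every $\lambda<\lambda^{\ast}(B)$, which gives $\lambda^{\ast}(f^{\ast})\le\lambda^{\ast}(f)$. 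I expect the main obstacle to be the bookkeeping specific to the unbounded domain: verifying that each iterate is a well-defined Newtonian potential which is continuous and genuinely vanishes at infinity (so that Theorem~\ref{talenti} applies at every stage and the supremum comparison is meaningful), correctly interpreting the rearrangement identity $(\lambda f g(u_{n}))^{\ast}=\lambda f^{\ast}g(u_{n}^{\ast})$ in this $\mathbb R^{d}$ setting, and checking the monotonicity and uniform-in-$n$ separation from $1$ of the iteration for $\lambda<\lambda^{\ast}$; once these are in place, the Talenti step itself is a direct quotation.
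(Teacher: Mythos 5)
Your proposal is correct and follows essentially the same route as the paper's own proof: the two Picard iterations on $\Omega$ and $B$, the intermediate iterate $\tilde v_{n+1}$ with source $\lambda f^{\ast}g(u_n^{\ast})$ handled by Theorem \ref{talenti}, the kernel-positivity/monotonicity step giving $\tilde v_{n+1}\le v_{n+1}$, and the transfer of convergence via $\max u_n^{\ast}=\max u_n\le \max v_n$. The extra bookkeeping you flag (well-definedness of the iterates, monotone convergence of $\{v_n\}$ for $\lambda<\lambda^{\ast}(B)$, and the limit being a genuine solution) is exactly what the paper treats, only more tersely, by appealing to the sub-/supersolution argument of Theorem \ref{THM:1}.
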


\begin{proof}[Proof of Theorem \ref{THM_main}]
	Let $u$ be any positive solution of \eqref{nonNewpot}. Define the sequence (the Picard iteration scheme)
		\begin{equation}\label{sequence}
	u_{m}(x)=\lambda\int_{\mathbb{R}^{d}}\varepsilon_{d}(x-y)\frac{f(y)}{(1-u_{m-1}(y))^{2}}dy,\; u_{0}(x)\equiv0,\;m=1,2,\ldots,
	\end{equation}
	with $f=1$ and $\text{supp}\,f\subset \Omega.$ We have $u>u_{0}\equiv0$ and whenever $u\geq u_{m-1}$, then 
	
		\begin{equation*}
u(x)-u_{m}(x)=\lambda\int_{\mathbb{R}^{d}}\varepsilon_{d}(x-y)f(y)
\left(\frac{1}{(1-u(y))^{2}}-\frac{1}{(1-u_{m-1}(y))^{2}}\right)dy\geq 0,
\end{equation*} 
for all $x\in\mathbb{R}^{d},$ that is,
$1>u\geq u_{m}$ in $\mathbb{R}^{d}$ for each $m\geq0.$ Moreover, from \eqref{sequence}
it is straightforward to see that the sequence  $\{u_{m}\}$ is monotone increasing. Thus, it converges uniformly to a positive solution $u_{\lambda}$ satisfying  $u\geq u_{\lambda}$ in $\mathbb{R}^{d}$. 
Consider the following two sequences 
	\begin{equation}\label{sequence1}
u_{n}(x)=\lambda\int_{\mathbb{R}^{d}}\varepsilon_{d}(x-y)\frac{f(y)}{(1-u_{n-1}(y))^{2}}dy,\; u_{0}(x)\equiv0,\;n=1,2,\ldots,
\end{equation}
with $ \text{supp}\,f\subset \Omega,$
and 
 	\begin{equation}\label{sequence2}
 v_{n}(x)=\lambda\int_{\mathbb{R}^{d}}\varepsilon_{d}(x-y)\frac{f^{\ast}(|y|)}{(1-v_{n-1}(y))^{2}}dy,\; v_{0}(x)\equiv0,\;n=1,2,\ldots.
 \end{equation}
We have 
$$-\Delta u_{1}=\lambda f,\quad -\Delta v_{1}=\lambda f^{\ast},$$
therefore, by Talenti's comparison principle for the Laplacian (see Theorem \ref{talenti}) we obtain 
	\begin{equation} \label{k=1}
u_{1}^{\ast}(x)\leq v_{1}(x),\quad \forall x\in \mathbb{R}^{d}.
 \end{equation}
We also have 
	\begin{equation}\label{sequence3}
-\Delta u_{2}=\lambda \frac{f}{(1-u_{1})^{2}}.
\end{equation}
In addition, let us consider 
	\begin{equation*}	
\tilde{v}_{2}(x)=\lambda\int_{\mathbb{R}^{d}}\varepsilon_{d}(x-y)\frac{f^{\ast}(|y|)}{(1-u^{\ast}_{1}(y))^{2}}dy,
\end{equation*}
that is, 
	\begin{equation}
	\label{sequence4}
	 -\Delta \tilde{v}_{2}=\lambda 
\frac{f^{\ast}}{(1-u^{\ast}_{1})^{2}}.
\end{equation}

Thus, by Theorem \ref{talenti} for \eqref{sequence3} and \eqref{sequence4} we obtain 
$$u_{2}^{\ast}(x)\leq \tilde{v}_{2}(x), \quad \forall x\in \mathbb{R}^{d}.$$
By using \eqref{k=1} we get 
	\begin{equation*}	
\tilde{v}_{2}(x)=\lambda\int_{\mathbb{R}^{d}}
\varepsilon_{d}(x-y)\frac{f^{\ast}(|y|)}
{(1-u^{\ast}_{1}(y))^{2}}dy\leq \lambda\int_{\mathbb{R}^{d}}
\varepsilon_{d}(x-y)\frac{f^{\ast}(|y|)}
{(1-v_{1}(y))^{2}}dy=v_{2}(x),
\end{equation*}
in $\mathbb{R}^{d}$, that is,
 $$u_{2}^{\ast}(x)\leq v_{2}(x), \quad \forall x\in \mathbb{R}^{d}.$$

 Further, by continuing this process we obtain that
  $u^{\ast}_{n}\leq v_{n}$ for all $n\geq0,$  that is, $\underset{B}{\rm {max}}\, u^{\ast}_{n} \leq\underset{B}{\rm {max}}\, v_{n}$ for all $n\geq0.$
  Since $\underset{B}{\rm {max}}\, u^{\ast}_{n}= \underset{\Omega}{\rm {max}}\, u_{n}$, it means that 
  for a given $\lambda$ if $\{v_{n}\}$ converges, then the sequence $\{u_{n}\}$ is also convergent. Thus, we arrive at $
  \lambda^{\ast}(f^{\ast})\leq \lambda^{\ast}(f)
  $.
\end{proof}

We have the following upper bound for the pull-in voltage (for the non-constant permittivity profile):
\begin{prop} Let $f$ be an integrable function with
		$ \text{supp}\,f\subset \Omega \subset \mathbb{R}^{d},\, d\geq3.$  Let 
	$\mu_{1}(\Omega)$ be the first eigenvalue of the Newtonian potential \eqref{sp_for_NP} in $\Omega$. Then 
	\begin{equation}\label{upperbounds}
	 \lambda^{\ast}(f)\leq \frac{4 \mu_{1}(\Omega)}{27} (\underset{\Omega}{\rm {inf}}\, f)^{-1}.
	\end{equation}
	
\end{prop}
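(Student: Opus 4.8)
\emph{Proof proposal.} The plan is to adapt, almost verbatim, the moment argument that produced the bound \eqref{estimate} in the proof of Theorem \ref{THM:1}, this time keeping the non-constant profile $f$. By Proposition \ref{prop_intro}, every classical solution of the MEMS problem with profile $f$ satisfies
$$u(x)=\lambda\int_{\Omega}\varepsilon_{d}(x-y)\frac{f(y)}{(1-u(y))^{2}}\,dy,\qquad 0\le u(x)<1 .$$
Let $\phi_{1}>0$ be the first eigenfunction of the Newtonian potential \eqref{sp_for_NP} on $\Omega$, with its (simple, positive) eigenvalue $\mu_{1}(\Omega)$. First I would test this identity against $\phi_{1}$: multiply by $\phi_{1}(x)$, integrate over $\Omega$, apply Fubini, and use the symmetry $\varepsilon_{d}(x-y)=\varepsilon_{d}(y-x)$ together with the eigenrelation $\int_{\Omega}\varepsilon_{d}(x-y)\phi_{1}(x)\,dx=\mu_{1}(\Omega)^{-1}\phi_{1}(y)$ to obtain
$$\int_{\Omega}u(x)\phi_{1}(x)\,dx=\frac{\lambda}{\mu_{1}(\Omega)}\int_{\Omega}\frac{f(y)\,\phi_{1}(y)}{(1-u(y))^{2}}\,dy .$$

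The next step combines two elementary estimates. On the right-hand side, $f(y)\ge\inf_{\Omega}f$ pointwise, so (assuming $\inf_{\Omega}f>0$, otherwise the claimed bound is vacuous) the right-hand side is at least $\tfrac{\lambda}{\mu_{1}(\Omega)}\big(\inf_{\Omega}f\big)\int_{\Omega}\phi_{1}(y)(1-u(y))^{-2}\,dy$. On the left, I would invoke the sharp scalar bound $t(1-t)^{2}\le\tfrac{4}{27}$ for $0\le t\le1$, whose maximum is attained at $t=\tfrac13$; rewritten as $t\le\tfrac{4}{27}(1-t)^{-2}$ and evaluated at $t=u(x)$, then integrated against $\phi_{1}\ge0$, it gives $\int_{\Omega}u\phi_{1}\,dx\le\tfrac{4}{27}\int_{\Omega}\phi_{1}(1-u)^{-2}\,dx$. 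Chaining the two inequalities and cancelling the common positive factor $\int_{\Omega}\phi_{1}(1-u)^{-2}\,dx$ yields $\lambda\big(\inf_{\Omega}f\big)\le\tfrac{4}{27}\,\mu_{1}(\Omega)$; taking the supremum over all admissible $\lambda$ then gives \eqref{upperbounds}.

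I do not expect a serious obstacle here. The one point worth care is the functional $I:=\int_{\Omega}\phi_{1}(1-u)^{-2}\,dx$ by which we divide: it is positive since $\phi_{1}>0$, and it is finite since a classical solution has $u<1$ and is continuous up to $\partial\Omega$; alternatively, one can run all estimates on the sublevel sets $\{u<1-\varepsilon\}$ and let $\varepsilon\to0^{+}$. The only genuinely non-routine ingredient is recognising that $t(1-t)^{2}\le\tfrac{4}{27}$ is precisely the inequality needed to convert the linear moment $\int u\phi_{1}$ into something comparable to the singular moment $\int\phi_{1}(1-u)^{-2}$ that appears after testing against $\phi_{1}$; this is exactly what sharpens, by the factor $\tfrac{4}{27}$, the bound $\lambda^{\ast}\le\mu_{1}(\Omega)$ that \eqref{estimate} reduces to when $f\equiv1$.
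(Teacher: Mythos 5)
Your proposal is correct and follows essentially the same route as the paper: test the integral identity against the first eigenfunction $\phi_{1}$ of the Newtonian potential, use $f\ge\inf_{\Omega}f$ and the elementary bound $t(1-t)^{2}\le\frac{4}{27}$, and cancel a positive integral. The only cosmetic difference is that the paper cancels the factor $\int_{\Omega}u\phi_{1}\,dx$ while you cancel $\int_{\Omega}\phi_{1}(1-u)^{-2}\,dx$; both cancellations are legitimate and lead to the same bound.
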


\begin{proof}[Proof of Proposition \ref{upperbounds}]
As in the	proof of Theorem \ref{THM:1}, for any $\lambda\in (0,\lambda^{\ast})$ we have 
	\begin{equation*}
\int_{\Omega}u(x)\phi_{1}(x)dx=
\frac{\lambda}{\mu_{1}}\int_{\Omega}\frac{\phi_{1}(y)f(y)}{(1-u(y))^{2}}dy, \,\, 0\leq u< 1.
\end{equation*}
Since $u(1-u)^{2}\leq \frac{4}{27}$ we obtain
	\begin{equation*}
\int_{\Omega}u(x)\phi_{1}(x)dx=
\frac{\lambda}{\mu_{1}}\int_{\Omega}\frac{u(y)\phi_{1}(y)f(y)}{u(y)(1-u(y))^{2}}dy\geq \frac{27\lambda\, \underset{\Omega}{\rm {inf}}\, f}{4\mu_{1}}\int_{\Omega}u(y)\phi_{1}(y)dy
\end{equation*}
proving the inequality \eqref{upperbounds}.
	\end{proof}

Note that, moreover, one can prove other upper estimates of the pull-in voltage that depends on the global properties of  the (non-constant) permittivity profile. For instance, for the Dirichlet case (see, e.g. \cite{GPW}) we have the estimate	
\begin{equation}
(\lambda^D)^{\ast}(f)\leq \frac{4 \mu^{D}_{1}(\Omega)}{3} \frac{\int_{\Omega}\phi^{D}_{1}(x)dx}{\int_{\Omega}\phi^{D}_{1}(y)f(y)dy},
\end{equation}
where $\mu^{D}_{1}$ and $\phi^{D}_{1}$ are the first eigenvalue and the first eigenfunction of 
the Dirichlet Laplacian, respectively.   

\subsection{The pull-in voltage for uniformly elliptic problems.}

 We consider the pull-in voltage problem for the second order uniformly elliptic differential operator:  
\begin{equation}\label{uniformly-elliptic}\left\{\begin{array}{l}
-L u(x)=\lambda f(x)g(u),\;0<u,\; x\in\Omega\subset \mathbb R^{d}, \\
u(x)=0,\,\,\,x\in\partial\Omega.
\end{array}\right.\end{equation}
where 
$$L=\sum_{j,k=1}^{d}\frac{\partial}{\partial x_{j}}\left(
a_{jk}(x)\frac{\partial}{\partial x_{k}} \right)$$
with $a_{jk}(x)=a_{kj}(x)$ is a second order uniformly elliptic differential operator, that is,  there exists a positive constant $c$ such that
$$\sum_{j,k=1}^{d}a_{j,k}(x)\xi_{j}\xi_{k}\geq c\sum_{j}^{d}\xi_{j}^{2}
$$
for all $\xi$ and $x\in \Omega.$
Let us assume that $f$ and $g$ are (smooth) positive functions
such that $\frac{\partial g(t)}{\partial t}>0$ in $R^{
+}$ and $(f(x)g(u(x)))^{\ast}= f^{\ast}(x)g(u^{\ast}(x))$, 

As before, $\lambda >0$ is the applied voltage and the pull-in is defined as
$$\lambda^{\ast}_{L}:=\lambda^{\ast}_{L}(\Omega,f)=\sup\{\lambda>0 \;|\; (3.23) \,\text{possesses at least one classical solution}\}.$$
We obtain the following geometric estimate for $\lambda^{\ast}_{L}$.
\begin{thm}\label{uniformly-ellipticTHM_main} A ball $B$ (with the symmetrized permittivity profile) is a minimizer of the Dirichlet pull-in voltage of the operator $L$ among all domains of given volume, i.e.
	$$
	\lambda^{\ast}_{L}(B,f^{\ast})\leq \lambda^{\ast}_{L}(\Omega,f), 
	$$
	for an arbitrary smooth domain $\Omega\subset \mathbb R^{d}, \,d\geq 1,$ with $|\Omega|=|B|,$ where $|\cdot|$ is the  Lebesgue measure in $\mathbb R^{d}$.
\end{thm}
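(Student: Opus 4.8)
\emph{Proof proposal.} The plan is to run the argument of the proof of Theorem~\ref{pTHM_main} with the $p$-Laplacian replaced by $L$, using the version of Talenti's comparison principle valid for Dirichlet problems of second order uniformly elliptic operators in divergence form (Section~4.3 of \cite{Bur}, and \cite{Tal,Talenti'77}): if $-Lu=h$ in $\Omega$ with $u|_{\partial\Omega}=0$ and $-c\Delta v=h^{\ast}$ in $B$ with $v|_{\partial B}=0$, where $c$ is the uniform ellipticity constant of $L$ and $|B|=|\Omega|$, then $u^{\ast}\le v$ on $B$. Accordingly, $\lambda^{\ast}_{L}(B,f^{\ast})$ is understood as the pull-in voltage of the symmetrized problem $-c\Delta v=\lambda f^{\ast}g(v)$ in $B$, $v|_{\partial B}=0$, the natural symmetric counterpart of \eqref{uniformly-elliptic}. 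The only ingredient beyond what is already used for Theorem~\ref{pTHM_main} is the basic existence theory for \eqref{uniformly-elliptic}: by transcribing the arguments of \cite{EGG} (for $p=2$) or \cite{CES} (for the $p$-Laplacian) and using the nonnegative Green function and the weak maximum principle of $L$, one obtains $\lambda^{\ast}_{L}<\infty$ and, for every $\lambda\in(0,\lambda^{\ast}_{L})$, a minimal classical solution of \eqref{uniformly-elliptic} realized as the monotone limit of the Picard scheme below.

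Concretely, I would first set up on $\Omega$ the iteration
\[
-L u_{n}=\lambda f\,g(u_{n-1})\ \text{in}\ \Omega,\qquad u_{n}|_{\partial\Omega}=0,\qquad u_{0}\equiv 0,\quad n=1,2,\ldots,
\]
and on the ball $B$, $|B|=|\Omega|$, the symmetrized iteration
\[
-c\,\Delta v_{n}=\lambda f^{\ast}\,g(v_{n-1})\ \text{in}\ B,\qquad v_{n}|_{\partial B}=0,\qquad v_{0}\equiv 0,\quad n=1,2,\ldots.
\]
Using the maximum principle for $L$, the sequence $\{u_{n}\}$ is monotone increasing and is dominated by any positive solution of \eqref{uniformly-elliptic}, and similarly for $\{v_{n}\}$. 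For the comparison step I proceed by induction. Since $-Lu_{1}=\lambda f g(0)$ and $-c\Delta v_{1}=\lambda f^{\ast}g(0)$, Talenti's principle for $L$ gives $u_{1}^{\ast}\le v_{1}$ on $B$. Assuming $u_{n}^{\ast}\le v_{n}$, introduce the auxiliary $\tilde v_{n+1}$ solving $-c\Delta \tilde v_{n+1}=\lambda\,(fg(u_{n}))^{\ast}=\lambda f^{\ast}g(u_{n}^{\ast})$ in $B$, $\tilde v_{n+1}|_{\partial B}=0$, where the hypothesis $(fg(u))^{\ast}=f^{\ast}g(u^{\ast})$ and the identity \eqref{gu=ug} are used; Talenti's principle for $L$ applied to $-Lu_{n+1}=\lambda fg(u_{n})$ then yields $u_{n+1}^{\ast}\le\tilde v_{n+1}$. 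Since $g$ is nondecreasing and $u_{n}^{\ast}\le v_{n}$, we have $\lambda f^{\ast}g(u_{n}^{\ast})\le\lambda f^{\ast}g(v_{n})$, so $\tilde v_{n+1}-v_{n+1}$ solves a $-c\Delta$ equation with nonpositive right-hand side and zero boundary values; by the maximum principle $\tilde v_{n+1}\le v_{n+1}$, hence $u_{n+1}^{\ast}\le v_{n+1}$, closing the induction.

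It follows that $\max_{\Omega}u_{n}=\max_{B}u_{n}^{\ast}\le\max_{B}v_{n}$ for all $n$, so if, for a given $\lambda$, the sequence $\{v_{n}\}$ converges (to a solution of the symmetrized problem), then the monotone sequence $\{u_{n}\}$ is bounded and hence converges as well, producing a solution of \eqref{uniformly-elliptic} for that $\lambda$. Letting $\lambda$ exhaust the respective intervals gives $\lambda^{\ast}_{L}(B,f^{\ast})\le\lambda^{\ast}_{L}(\Omega,f)$.

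The main obstacle is not the iteration but the correct bookkeeping of Talenti's principle for $L$: the symmetrized comparison is with the Laplacian (scaled by the ellipticity constant $c$) on the ball rather than with $L$ itself, so one must fix the meaning of $\lambda^{\ast}_{L}(B,f^{\ast})$ accordingly and verify that the two iterations above are genuinely the Picard schemes of the two problems being compared. A secondary, purely technical point is to record the existence and minimality of semi-stable solutions for the $L$-MEMS problem \eqref{uniformly-elliptic}; this requires no new idea, only an adaptation of \cite{EGG,CES} to a linear uniformly elliptic operator possessing a maximum principle and a nonnegative Green function.
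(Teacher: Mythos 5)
Your proposal is correct and follows the same skeleton as the paper's proof: Picard schemes on $\Omega$ and on the ball, Talenti's principle for the first iterate, an induction that combines the hypothesis $(fg(u))^{\ast}=f^{\ast}g(u^{\ast})$, the monotonicity of $g$, and a comparison/maximum principle on $B$, and the conclusion via $\max_{\Omega}u_{n}=\max_{B}u_{n}^{\ast}\le\max_{B}v_{n}$. The genuine difference is the choice of symmetrized problem on the ball. You iterate $-c\Delta v_{n}=\lambda f^{\ast}g(v_{n-1})$ in $B$, which is the form in which Talenti's principle for a divergence-form $L$ is actually stated (comparison with the Laplacian scaled by the ellipticity constant), and you accordingly reinterpret $\lambda^{\ast}_{L}(B,f^{\ast})$ as the pull-in voltage of that $c$-Laplacian problem. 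The paper instead keeps the operator $L$ on the ball, iterating $-Lv_{n}=\lambda f^{\ast}g(v_{n-1})$ in $B$, and invokes ``Talenti's comparison principle for uniformly elliptic operators'' directly between the two $L$-problems; this matches the theorem as literally stated (with $\lambda^{\ast}_{L}(B,f^{\ast})$ defined through $L$ on $B$), but it uses a stronger comparison than the standard result in \cite{Tal}: for variable coefficients the solution of $-Lv=h^{\ast}$ in $B$ can lie strictly below the solution of $-c\Delta v=h^{\ast}$, so $u^{\ast}\le v$ is not automatic without extra assumptions (it is harmless, e.g., for constant coefficients). So your route makes the key comparison step airtight at the cost of proving the inequality for a differently normalized symmetrized problem, while the paper's route proves the statement as written but leans on a version of Talenti's principle that needs the justification your ``bookkeeping'' remark identifies. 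A minor sourcing difference: the paper takes existence, finiteness of $\lambda^{\ast}_{L}$, and minimality of the iterative limit from \cite{Bandle} and \cite{Keller-Cohen} rather than adapting \cite{EGG,CES}; either is fine.
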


\begin{proof}[Proof of Theorem \ref{uniformly-ellipticTHM_main}]
	In \cite[Chapter IV]{Bandle} it was proved that $\lambda^{\ast}_{L}<\infty$ and for every $\lambda \in (0,\lambda^{\ast}_{L})$ there is a minimal (and semi-stable) solution $u_{\lambda}$ (i.e. $u_{\lambda}$ is the smallest positive solution of \eqref{uniformly-elliptic} in a pointwise sense). Define the sequence 
	\begin{equation}\label{unisequence}
	-L u_{m}(x)=\lambda f(x)g(u_{m-1}),\;u_{0}(x)\equiv0, \;m=1,2,\ldots,
	\end{equation}
	with $u_{m}=0$ on the boundary $\partial\Omega$ of the smooth bounded domain $\Omega$.  Dirichlet boundary value problem \eqref{uniformly-elliptic} is solvable if and only if the sequence $\{u_{m}\}$ is uniformly bounded. Moreover,  the sequence $\{u_{m}\}$ is uniformly bounded, then it  converges uniformly to a minimal solution $u_{\lambda}$ satisfying  $u\geq u_{\lambda}$ in $\Omega$ (see \cite{Keller-Cohen}), where $u$ is a positive solution of \eqref{uniformly-elliptic}. 
	As in the previuos proofs let us consider the following two sequences 
	\begin{equation}\label{unisequence1}
	-L u_{n}(x)=\lambda f(x)g(u_{n-1})\quad{\rm in}\,\Omega,\;u_{n}(x)|_{\partial\Omega}=0, \;n=1,2,\ldots,
	\end{equation}
	and 
	\begin{equation}\label{unisequence2}
	-L v_{n}(x)=\lambda f^{\ast}(|x|)g(v_{n-1})\quad{\rm in}\,B,\; v_{n}(x)|_{\partial B}=0, \;n=1,2,\ldots,
	\end{equation}
	with $u_{0}\equiv0$ and $v_{0}\equiv0$. We have 
	$$-L u_{1}(x)=\lambda f(x) g(0)\quad{\rm in}\,\Omega,\; u_{1}|_{\partial\Omega}=0,$$
	and
	$$-L v_{1}(x)=\lambda f^{\ast}(|x|)g(0)\quad{\rm in}\,B,\; v_{1}(x)|_{\partial B}=0.$$
	Here $f^{\ast}$ is the symmetric decreasing rearrangement of the positive function $f$.
	Therefore, by Talenti's comparison principle for the Dirichlet boundary value problem for second order uniformly elliptic differential operators  we obtain 
	\begin{equation} \label{unik=1}
	u_{1}^{\ast}(x)\leq v_{1}(x),\quad \forall x\in B,
	\end{equation}
	where $B$ is the ball centered at the origin with $|B|=|\Omega|.$
	We also have 
	\begin{equation}\label{unisequence3}
	-L u_{2}=\lambda f g(u_{1})\quad{\rm in}\,\Omega,\; u_{2}|_{\partial\Omega}=0.
	\end{equation}
	In addition, let us consider 
	\begin{equation*}	
	-L\tilde{v}_{2}(x)=\lambda (f g(u_{1}))^{\ast}\quad{\rm in}\,B,\; \tilde{v}_{2}|_{\partial B}=0,
	\end{equation*}
	that is, 
	\begin{equation}
	\label{unisequence4}
	-L\tilde{v}_{2}(x)=\lambda f^{\ast} g(u^{\ast}_{1})\quad{\rm in}\,B,\; \tilde{v}_{2}|_{\partial B}=0.
	\end{equation}
	By Talenti's comparison principle for \eqref{unisequence3} and \eqref{unisequence4} we obtain 
	$$u_{2}^{\ast}(x)\leq \tilde{v}_{2}(x), \quad \forall x\in B.$$
	By using \eqref{unik=1} we get 
	\begin{equation*}	
	-L\tilde{v}_{2}(x)=\lambda f^{\ast} g(u^{\ast}_{1})\leq \lambda f^{\ast} g(v_{1})=-L v_{2}(x),
	\end{equation*}
	in $B$, that is, by the (classical) comparison principle for $-L$ we get 
	$$\tilde{v}_{2}(x)\leq v_{2}(x), \quad \forall x\in B.$$ 
	This gives
	$$u_{2}^{\ast}(x)\leq v_{2}(x), \quad \forall x\in B.$$
	By induction we arrive at
	$u^{\ast}_{n}\leq v_{n}$ in $B$ for all $n\geq0,$  that is, $\underset{B}{\rm {max}}\, u^{\ast}_{n} \leq\underset{B}{\rm {max}}\, v_{n}$ for all $n\geq0.$
	Since $\underset{B}{\rm {max}}\, u^{\ast}_{n}= \underset{\Omega}{\rm {max}}\, u_{n}$, it means that 
	for a given $\lambda$ if $\{v_{n}\}$ converges, then the sequence $\{u_{n}\}$ is also convergent. This fact proves $
	\lambda^{\ast}_{L}(B,f^{\ast})\leq \lambda^{\ast}_{L}(\Omega, f)
	$.
\end{proof}

\medskip

{\bf Conflict of Interest Statement.}

The authors declare that there is no conflict of
interest.

\end{document}